\documentclass[11pt,a4paper]{article}
\usepackage[T1]{fontenc}
\usepackage{graphicx}
\usepackage{mathtools}
\usepackage{amssymb}
\usepackage{amsthm}
\usepackage{thmtools}
\usepackage{microtype}
\usepackage{ragged2e}
\justifying 
\usepackage{xcolor}
\usepackage{nameref}
\usepackage{hyperref}
\hypersetup{
	colorlinks=true,
	linkcolor=blue,
	citecolor=blue,
	filecolor=magenta,      
	urlcolor=cyan,
}
\usepackage{algorithm}
\usepackage{algorithmicx}
\usepackage{algpseudocode} 
\bibliographystyle{plain}
\usepackage{setspace}
\usepackage{amsfonts}
\title{Pontryagin's Principle Based Algorithms for Optimal Control Problems of Parabolic Equation}
\author{Weilong You \thanks{College of Science, University of Shanghai for Science and Technology,
		Shanghai 200093, China (email: {\tt 1127880510@qq.com}).}
	\and Fu~Zhang\thanks{Corresponding author. College of Science, University of Shanghai for Science and Technology,
		Shanghai 200093, China (email: {\tt fuzhang82@gmail.com}).
		F.~Zhang was partially supported 
		by the National Natural Science Foundation of China (No. 11701369, 12071292).}}
\date{\today}
\allowdisplaybreaks

\newtheorem{theorem}{Theorem}[section] 
\newtheorem{lemma}[theorem]{Lemma}     
\newtheorem{proposition}[theorem]{Proposition} 

\newtheorem{assumption}{Assumption}
\begin{document}
	\maketitle
	\textbf{Abstract} This paper applies the Method of Successive Approximations (MSA) based on Pontryagin's principle to solve optimal control problems with state constraints for semilinear parabolic equations. Error estimates for the first and second derivatives of the function are derived under \( L^{\infty} \)-bounded conditions. An augmented MSA is developed using the augmented Lagrangian method, and its convergence is proven. The effectiveness of the proposed method is demonstrated through numerical experiments.\\
	\\
	\textbf{Keywords} Optimal control $\cdot$ Parabolic equation $\cdot$ Pontryagin's Principle $\cdot$ MSA\\
	\section{Introduction}
In this paper, we investigate the optimal control problem for parabolic equations with state constraints and Neumann boundary conditions, formulated as follows:

\begin{equation}
	\begin{aligned}
		\min_{u,v} J(y, u, v) := &\int_{\Omega_T} F(x, t, y, u) \, dx \, dt + \int_{\Sigma_T} G(s, t, y, v) \, ds \, dt \\
		&+ \int_{\Omega} L(x, y(x, T)) \, dx,
	\end{aligned}
	\label{P}
	\tag{P}
\end{equation}
subject to the dynamics
\begin{align*}
	\frac{\partial y}{\partial t} + A y + f(x, t, y, u) &= 0, \quad \mathrm{in }\; \Omega_T, \\
	\frac{\partial y}{\partial n_A} + v &= 0, \quad \mathrm{on }\; \Sigma_T, \\
	y(\cdot, 0) &= y_0, \quad \mathrm{in }\; \overline{\Omega},
\end{align*}
where \( A \) is a second-order elliptic operator. The specific problem setup is described in Subsection 2.1.

The solution methods most commonly employed to address such problems include solving the Hamilton-Jacobi-Bellman (HJB) equation \cite{Bardi1997O,Falcone2014S} and utilizing the Pontryagin maximum principle \cite{Casas1997P,Raymond1999H}. Although both approaches have been extensively studied, analytical solutions are generally unavailable for most practical cases, leading to the development of various numerical methods.

Current numerical techniques for solving the HJB equation include semi-Lagrangian schemes \cite{Falcone2014S}, sparse grid approaches \cite{Kang2017M}, tensor-calculus-based methods \cite{Stefansson2016S}, and successive Galerkin approximations \cite{Beard1997G,Beard1998A,Beeler2000F,Kalise2018P}. Notably, policy iteration and Howard's algorithm \cite{Alla2015A,Bokanowski2009S,Howard1960D} are discrete implementations of the continuous Galerkin method. In the numerical solution of Pontryagin’s optimality principle, approaches such as the two-point boundary value problem method \cite{Bryson1975A,Roberts1972T}, collocation methods combined with nonlinear programming techniques \cite{Betts1994A,Betts1997E,Bertsekas1999N,Bazaraa2006N}, and the Method of Successive Approximations (MSA) \cite{Chernous1982M} are often applied.

MSA plays a crucial role in the numerical resolution of both types of problems, as it is an iterative procedure based on alternating propagation and optimization steps. In \cite{Li2017M}, MSA is employed to solve optimal control problems governed by ordinary differential equations, with an extended version derived via the augmented Lagrangian method \cite{Hestenes1969M}. Additionally, in \cite{Sethi2024T}, a modified version of MSA for stochastic control problems is proposed, and convergence of the algorithm is established. In this paper, we provide an error estimate for MSA applied to optimal control problems of parabolic equations under specific conditions (see Lemma 3.3) and propose the corresponding augmented MSA.

The structure of this paper is as follows: In Section 2, we present preliminary results related to the original problem \(\eqref{P}\) and the corresponding Pontryagin maximum principle. In Section 3, we apply the Method of Successive Approximations (MSA) and its improved variant to solve the problem. Specifically, in Section 3.1, we introduce the basic MSA based on the Pontryagin principle; in Section 3.2, we state certain assumptions under which we derive the error estimate for the basic MSA; in Section 3.3, we introduce the augmented Pontryagin principle and the augmented MSA based on it; and in Section 3.4, we prove the convergence of the augmented MSA. Finally, in Section 4, we demonstrate the effectiveness of the augmented MSA through numerical experiments.\\
\textbf{Notation} Throughout this paper, \( (\cdot, \cdot)_{\Omega} \) denotes the inner product in \( L^2(\Omega) \), and \( (\cdot, \cdot)_{\Gamma} \) denotes the inner product in \( L^2(\Gamma) \). The constant \( C \) appearing in the text may represent different values and will be adjusted as necessary.

	\section{Preliminary results}
   \subsection{Problem setting}
Let \( \Omega \subset \mathbb{R}^N \) be an open, bounded subset of \( \mathbb{R}^N \) with a \( C^{2,\beta} \)-boundary, where \( N \in \{2,3\} \). That is, the boundary \( \partial \Omega \) is a \( (N-1) \)-dimensional manifold of class \( C^{2,\beta} \), and \( \Omega \) lies locally on one side of its boundary. A function is said to be of class \( C^{2,\beta} \) if it is of class \( C^2 \) and its second-order partial derivatives are Hölder continuous with exponent \( \beta \). For a fixed time interval \( T > 0 \), we define the space-time domain \( \Omega_T := \Omega \times (0,T) \) and its lateral boundary \( \Sigma_T := \partial \Omega \times (0,T) \).

 The function space $\mathcal{Y}$ is given by
 \[
 \mathcal{Y} := \mathcal{W}(0,T; L^2(\Omega), H^1(\Omega)) \cap C(\overline{\Omega}_T),
 \]
 where $\mathcal{W}(0,T; L^2(\Omega), H^1(\Omega))$ denotes the Hilbert space of functions $y$ such that $y \in L^2(0,T; H^1(\Omega))$ and $y_t \in L^2(0,T; H^{-1}(\Omega))$. The associated norm is defined by
 \[
 \|y\|_{\mathcal{W}(0,T; L^2(\Omega), H^1(\Omega))} := \sqrt{\|y\|_{L^2(0,T; H^1(\Omega))}^2 + \|y_t\|_{L^2(0,T; H^{-1}(\Omega))}^2}.
 \]
 
 The control spaces are defined as $\mathcal{U} := L^r(\Omega_T)$ and $\mathcal{V} := L^q(\Sigma_T)$, where the exponents $r > \frac{N}{2} + 1$ and $q > N + 1$ ensure the required regularity and integrability conditions.
 
 Consider the second-order differential operator $A$ given by
 \[
 A y(x, t) = -\sum_{i, j=1}^N \partial_{x_j} \big(a_{ij}(x) \partial_{x_i} y(x, t)\big),
 \]
 where the coefficients $a_{ij}(x) \in L^\infty(\Omega)$ satisfy $a_{ij}(x) = a_{ji}(x)$ for all $i, j$. The operator $A$ is assumed to be uniformly elliptic, meaning there exists a constant $K > 0$ such that
 \[
 \sum_{i, j=1}^N a_{ij}(x) \xi_i \xi_j \geq K \|\xi\|^2 \quad \text{for almost every } x \in \Omega \text{ and all } \xi \in \mathbb{R}^N.
 \]
 
 The goal is to minimize the cost functional
 
 \begin{equation}
 \begin{aligned}
 	J(y, u, v) := &\int_{\Omega_T} F(x, t, y, u) \, dx \, dt + \int_{\Sigma_T} G(s, t, y, v) \, ds \, dt \\
 	&+ \int_{\Omega} L(x, y(x, T)) \, dx,
 \end{aligned}
 \label{1}
 \end{equation}
 
 subject to the parabolic system
 \begin{equation}
 \begin{aligned}
 	\frac{\partial y}{\partial t} + A y + f(x, t, y, u) &= 0, \quad \mathrm{in }\; \Omega_T, \\
 	\frac{\partial y}{\partial n_A} + v &= 0, \quad \mathrm{on }\; \Sigma_T, \\
 	y(\cdot, 0) &= y_0, \quad \mathrm{in }\; \overline{\Omega},
 \end{aligned}
 \label{2}
\end{equation}
 where the conormal derivative is given by
 \[
 \frac{\partial y}{\partial n_A}(x, t) = \sum_{i, j=1}^N a_{ij}(x) \partial_{x_i} y(x, t) \nu_j(x),
 \]
 and $\nu = (\nu_1, \dots, \nu_N)$ denotes the unit outward normal vector to $\Gamma$.\\
The following assumption applies throughout this paper, and the functions \( f \), \( F \), \( G \), and \( L \) are assumed to satisfy the conditions below.  

\begin{assumption}  
	Let \( M_1(\cdot,\cdot) \in L^r(\Omega_T) \), \( M_2(\cdot) \in L^1(\Omega) \), \( M_3 > 0 \), \( M_4(\cdot,\cdot) \in L^1(\Omega_T) \), \( M_5(\cdot,\cdot) \in L^1(\Sigma_T) \), \(M_6(\cdot,\cdot)\in L^q(\Sigma_T)\) and \( \eta(|\cdot|) \) be a nondecreasing function mapping \( \mathbb{R}^+ \) to \( \mathbb{R}^+ \). Furthermore, let \( m_1 > 0 \) and \( C_0 > 0 \). The following conditions hold:  
	
	1. The function \( f(x,t,y,u) \):  
	- For every \( (y,u) \in \mathbb{R}^2 \), \( f( \cdot, \cdot, y, u) \) is measurable on \( \Omega_T \).  
	- For almost every \( (x,t) \in \Omega_T \), \( f(x,t,\cdot,\cdot) \) is continuous on \( \mathbb{R} \times \mathbb{R} \). 
	- For almost every \( (x,t) \in \Omega_T \) and every \(u \in \mathbb{R}\), \(f(x,t,\cdot,u)\) is of class \( C^2 \) on \(\mathbb{R}\).  
	- For almost every \( (x,t) \in \Omega_T \),  
	\[
	|f(x,t,0,u)| \leq M_1(x,t) + m_1 |u|, \]
	\[\quad C_0 \leq f'_y(x,t,y,u) \leq (M_1(x,t) + m_1 |u|)\eta(|y|).
	\]  
	
	2. The function \( L(x,y) \):  
	- For every \( y \in \mathbb{R} \), \( L(\cdot, y) \) is measurable on \( \Omega \).  
	- For almost every \( x \in \Omega \), \( L(x,\cdot) \) is continuous on \( \mathbb{R} \) and is of class \( C^2 \) on \(\mathbb{R}\).  
	- For almost every \( x \in \Omega \),  
	\[
	|L(x,0)| \leq M_2(x), \quad |L'_y(x,y)| \leq M_3 \eta(|y|),\]
	\[ \quad |L'_y(x,y) - L'_y(x,z)| \leq M_3 \eta(|y|)\eta(|z|).
	\]  
	
	3. The function \( F(x,t,y,u) \):  
	- For every \( (y,u) \in \mathbb{R}^2 \), \( F(\cdot,\cdot, y, u) \) is measurable on \( \Omega_T \).  
	- For almost every \( (x,t) \in \Omega_T \), \( F(x,t,\cdot,\cdot) \) is continuous on \( \mathbb{R} \times \mathbb{R} \). 
	- For almost every \( (x,t) \in \Omega_T \) and every \(u \in \mathbb{R}\), \(F(x,t,\cdot,u)\) is of class \( C^2 \) on \(\mathbb{R}\). 
	- For almost every \( (x,t) \in \Omega_T \),  
	\[
	|F(x,t,0,u)| \leq M_4(x,t) + m_1 |u|^r, \quad |F'_y(x,t,y,u)| \leq (M_1(x,t) + m_1 |u|)\eta(|y|).
	\]  
	
	4. The function \( G(s,t,y,v) \):  
	- For every \( (y,v) \in \mathbb{R}^2 \), \( G(\cdot,\cdot, y, v) \) is measurable on \( \Sigma_T \).  
	- For almost every \( (s,t) \in \Sigma_T \), \( G(s,t,\cdot,\cdot) \) is continuous on \( \mathbb{R} \times \mathbb{R} \). 	
	- For almost every \( (s,t) \in \Sigma_T \) and every \(v \in \mathbb{R}\), \(G(s,t,\cdot,v)\) is of class \( C^2 \) on \(\mathbb{R}\). 
	- For almost every \( (s,t) \in \Sigma_T \),  
	\[
	|G(s,t,0,v)| \leq M_5(s,t) + m_1 |v|^q, \quad |G'_y(s,t,y,v)| \leq (M_6(s,t) + m_1 |v|)\eta(|y|).
	\]  
\end{assumption}
   \subsection{Pontryagin's Principle}
In this section, we will introduce the first order necessity condition for the optimal solution of the problem p, which is the Pontryagin's principle. Among them, the complete proof process of the Pontryagin's principle of the control problem governed by Semilinear parabolic equation involved in this paper is given in \cite{Raymond1999H}.\\
First, we define the Hamiltonian function expression of the problem \eqref{P}, including the distribution Hamiltonian function and the boundary Hamiltonian function, and the expression is as follows:\\
$$H_{\Omega_T}(x,t,y,u,p):=F(x, t, y, u)-pf(x, t, y, u),$$
for every $(x,t,y,u,p)\in \Omega \times (0,T)\times R\times R\times R,$\\
$$H_{\Sigma_T}(x,t,y,v,p):=G(s, t, y, u)-pv,$$
for every $(s,t,y,v,p)\in \Sigma \times (0,T)\times R\times R\times R$.\\
\begin{theorem}
	Let $(\bar{y},\bar{u},\bar{v})$ be the solution of $(P)$. Then there exists a unique adjoint state $\bar{p} \in \mathcal{W}(0,T;L^2(\Omega),H^{1}(\Omega))\cap C(\Omega_T) $, satisfying the following equation.\\
	\begin{equation}
	\begin{aligned}
		-\frac{\partial p}{\partial t} + Ap + f'_y(x,t,\bar{y},\bar{u})p &= F'_y(x,t,\bar{y},\bar{u}),  & \mathrm{in} \; \Omega_T,\\
		\frac{\partial p}{\partial n_A} &= G'_y(s,t,\bar{y},\bar{v}), & \mathrm{on}\; \Sigma_T,\\
		p(\cdot,T) &= L'_y(x, y(\cdot, T)), & \mathrm{in} \;\;\;\Omega.
	\end{aligned}
	\label{3}
		\end{equation}
	and such that\\
	\begin{align*}
		&H_{\Omega_T}(x,t,\bar{y},\bar{u},\bar{p})=\min_{u\in U_{ad}}H_{\Omega_T}(x,t,\bar{y},u,\bar{p}),\\
		&\mathrm{for \; a.e.}  \; (x,t)\in \Omega_T,\\
		&H_{\Sigma_T}(s,t,\bar{y},\bar{v},\bar{p})=\min _{v\in V_{ad}}H_{\Sigma_T}(s,t,\bar{y},v,\bar{p}),\\
		&\mathrm{for \; a.e.} \; (s,t)\in \Sigma_T.
	\end{align*}
\end{theorem}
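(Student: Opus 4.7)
The plan is to prove Pontryagin's principle via needle (spike) variations, the scheme indicated by the reference \cite{Raymond1999H}, adapted to the Neumann boundary-control setting here. Preparatory to the variations, I would first use Assumption 1, the lower bound $f'_y \geq C_0 > 0$, and the $L^\infty$-regularity of $\bar y$ provided by the exponents $r > N/2 + 1$ and $q > N+1$ to establish: (a) well-posedness of the state equation \eqref{2} in $\mathcal{Y}$ with continuous (and Gâteaux-differentiable) dependence on $(u,v)$, and (b) existence and uniqueness of a solution $\bar p \in \mathcal{W}(0,T;L^2(\Omega),H^1(\Omega)) \cap C(\Omega_T)$ of the linear backward parabolic adjoint equation \eqref{3}. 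The coefficient $f'_y(\cdot,\bar y,\bar u)$ is bounded below and lies in the needed Lebesgue class thanks to Assumption 1, and the right-hand sides $F'_y,G'_y,L'_y$ are in the correct spaces by the $L^\infty$ bound on $\bar y$.

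The core of the argument is a spike variation on $\Omega_T$. Fix a Lebesgue density point $(x_0,t_0)$ of $\bar u$ and an arbitrary $u \in U_{ad}$. For small $\rho > 0$ set $E_\rho := B_\rho(x_0,t_0) \cap \Omega_T$ and define the spiked control $u_\rho := u\,\mathbf{1}_{E_\rho} + \bar u\,\mathbf{1}_{\Omega_T \setminus E_\rho}$; let $y_\rho$ be the corresponding state with $v = \bar v$ held fixed. Standard continuous-dependence estimates for semilinear parabolic equations give $z_\rho := y_\rho - \bar y \to 0$ in $\mathcal{Y}$ as $\rho \to 0$, with $z_\rho$ controlled at first order by the linearized system
\begin{equation*}
\partial_t z_\rho + A z_\rho + f'_y(\cdot,\bar y,\bar u)\,z_\rho = -\bigl[f(\cdot,\bar y,u) - f(\cdot,\bar y,\bar u)\bigr]\mathbf{1}_{E_\rho} + r_\rho,
\end{equation*}
with homogeneous Neumann data, zero initial data, and remainder $r_\rho = o(|E_\rho|)$ in $L^2(0,T;H^{-1}(\Omega))$.

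Optimality then gives $J(y_\rho,u_\rho,\bar v) - J(\bar y,\bar u,\bar v) \geq 0$. Expanding to first order using the $C^2$ hypotheses on $F,G,L$ and the smallness of $z_\rho$, this inequality reads
\begin{align*}
0 \leq{}&\int_{E_\rho} \bigl[F(\cdot,\bar y,u) - F(\cdot,\bar y,\bar u)\bigr]\,dx\,dt + \int_{\Omega_T} F'_y(\cdot,\bar y,\bar u)\,z_\rho\,dx\,dt \\
&+ \int_{\Sigma_T} G'_y(\cdot,\bar y,\bar v)\,z_\rho\,ds\,dt + \int_{\Omega} L'_y(x,\bar y(x,T))\,z_\rho(x,T)\,dx + o(|E_\rho|).
\end{align*}
Multiplying the linearized equation for $z_\rho$ by $\bar p$, integrating by parts in space and time, and using each of the three equations in \eqref{3} (the bulk PDE, the Neumann condition for $\bar p$, and the terminal condition $\bar p(\cdot,T) = L'_y$) to substitute the $F'_y$, $G'_y$, and $L'_y$ integrals, the inequality collapses to
\begin{equation*}
0 \leq \int_{E_\rho} \bigl[H_{\Omega_T}(x,t,\bar y,u,\bar p) - H_{\Omega_T}(x,t,\bar y,\bar u,\bar p)\bigr]\,dx\,dt + o(|E_\rho|).
\end{equation*}
Dividing by $|E_\rho|$ and sending $\rho \to 0$, the Lebesgue differentiation theorem yields the pointwise minimum principle on $\Omega_T$. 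The analogous boundary spike (varying $v$ on a surface ball of $\Sigma_T$, keeping $u = \bar u$) gives the principle for $\bar v$ with the help of the trace regularity afforded by $q > N+1$.

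The step I expect to be the main obstacle is the integration-by-parts pairing of $\bar p$ against $z_\rho$: doing this rigorously requires the duality $\langle \partial_t z_\rho, \bar p\rangle_{H^{-1},H^1}$, the trace pairing on $\Sigma_T$, and the terminal pairing at $t=T$ to all be well-defined simultaneously, which is exactly what the regularity $\bar p \in \mathcal{W}(0,T;L^2,H^1) \cap C(\Omega_T)$ is designed to ensure. A secondary technical burden is quantifying the $o(|E_\rho|)$ remainders uniformly: one needs the $L^\infty$ bound on $\bar y$ (so that the factors $\eta(|\bar y|)$ in Assumption 1 become constants) together with second-order Taylor remainders for $F,G,L$ to absorb the nonlinear corrections arising both in the cost expansion and in the state equation linearization.
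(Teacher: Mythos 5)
The paper offers no proof of this theorem: its entire ``proof'' is the citation of Theorem 2.1 of \cite{Raymond1999H}, so there is no in-paper argument to compare against, and your sketch is really a reconstruction of what that reference does. The architecture you describe (well-posedness and adjoint regularity first, then a control perturbation, first-order expansion of $J$, duality pairing with $\bar p$, Lebesgue differentiation) is the right one. Two substantive caveats about the version you wrote down. First, the cited proof does not use ball spikes $E_\rho=B_\rho(x_0,t_0)$ but \emph{diffuse} perturbations: measurable sets $E_\rho$ of measure $\rho|\Omega_T|$ chosen via Lyapunov's convexity theorem so that $\mathbf{1}_{E_\rho}$ converges weakly-$*$ to the constant $\rho$. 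This is not a stylistic choice: for a concentrated spike the assertion that the quadratic remainders are $o(|E_\rho|)$ is exactly the non-obvious step, since the naive energy estimate only gives $\|z_\rho\|^2_{L^2(\Omega_T)}=O(|E_\rho|)$, i.e.\ the same order as the leading term you want to keep. Your spike version can be rescued by combining an $L^1$-stability estimate for the perturbed state (obtained by duality) with the $L^r\to L^\infty$ parabolic regularization guaranteed by $r>N/2+1$, so that $\|z_\rho\|^2_{L^2}\le\|z_\rho\|_{L^1}\|z_\rho\|_{L^\infty}=o(|E_\rho|)$; but this is precisely the step you set aside as a ``secondary technical burden,'' and for the boundary spike one additionally needs $L^1(\Sigma_T)$-to-interior estimates afforded by $q>N+1$. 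Second, upgrading the resulting integral inequality to the stated pointwise form ``for a.e.\ $(x,t)$ and all $u\in U_{ad}$'' requires running the density-point argument over a countable dense subset of $U_{ad}$ (or a measurable selection argument); a single application of the Lebesgue differentiation theorem per fixed $u$ only yields the quantifiers in the weaker order.
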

\begin{proof}
	See Theorem 2.1 of \cite{Raymond1999H}.
\end{proof}
   \section{Method of Successive Approximations}
In this section, we propose a numerical algorithm to solve problem \eqref{P} based on the maximum principle. The algorithm's error analysis and convergence properties will be examined in the framework of continuous time and space. To achieve this, we employ the Method of Successive Approximations (MSA), initially introduced by Chernousko and Lyubushin in~\cite{Chernous1982M}. Below, we present the fundamental formulation of the method.
   \subsection{Basic MSA}
The basic MSA alternates between solving the state equation and the adjoint equation, followed by an optimization step to update the control variables. In this approach, the state equation captures the system's dynamics, while the adjoint equation provides gradient information essential for optimizing the Hamiltonian. At each iteration, the control variables are updated by minimizing the Hamiltonian over the admissible control sets. This iterative process is repeated until a predefined termination criterion is met, ensuring convergence under suitable conditions. The detailed steps of the algorithm are presented in Algorithm \ref{alg:msa} below.
\begin{algorithm}[H]
	\caption{Basic MSA}
	\label{alg:msa}
	\begin{algorithmic}[1]
		\State \textbf{Initialize:} Choose initial controls $(u_{1}, v_{1}) \in L^r(\Omega_T) \times L^q(\Sigma_T)$, set iteration index $i = 1$. 
		\Repeat 
		\State \textbf{Solve the state equation:}
		\[
		\frac{\partial y_i}{\partial t} + A y_i + f(x, t, y_i, u_i) = 0, \quad 
		\frac{\partial y_i}{\partial n_A} + v_i = 0, \quad y_i(\cdot,0) = y_0.
		\]
		\State \textbf{Solve the adjoint equation:}
		\[
		-\frac{\partial p_i}{\partial t} + A p_i = \nabla_y H_{\Omega_T}(x, t, y_i, u_i, p_i),
		\]
		\[
		\frac{\partial p_i}{\partial n_A} = \nabla_y H_{\Sigma_T}(s, t, y_i, v_i, p_i), \quad 
		p_i(\cdot, T) = L'_y(x, y_i(\cdot, T)).
		\]
		\State \textbf{Update the controls:}
		\[
		\begin{aligned}
			u_{i+1} &= \arg\min_{u \in U_{ad}} H_{\Omega_T}(x, t, y_i, u, p_i), \\
			v_{i+1} &= \arg\min_{v \in V_{ad}} H_{\Sigma_T}(s, t, y_i, v, p_i),
		\end{aligned}
		\]
		for each $(x, s, t) \in \Omega \times \Gamma \times [0,T]$.
		\State Increment the iteration index: $i := i + 1$.
		\Until{The termination criterion is satisfied.}
	\end{algorithmic}
\end{algorithm}
The convergence of the basic MSA has been established for a limited class of linear quadratic regulators~\cite{Aleksandrov1968O}. However, it is well-documented that the method often diverges in more general settings, particularly when unfavorable initial controls is chosen~\cite{Aleksandrov1968O, Chernous1982M}. This divergence underscores the need to understand the instability of the algorithm, specifically the relationship between the maximization step in Algorithm~\ref{alg:msa} and the underlying optimization problem \eqref{P}. To address this issue, our goal is to modify the basic MSA to ensure robust convergence, even under less favorable conditions.
   \subsection{Error Estimate}
In this section, we establish the relationship between the control index $J$ and the minimization of the Hamiltonian through a lemma. Prior to this, we introduce the necessary assumptions.
\begin{assumption}
For every \(y\in\mathcal{Y}\), \(u\in\mathcal{U}\), \(v\in\mathcal{V}\), the first-order and second-order derivatives of \( f \), \(L\), \(F\), \(G\) satisfy:  
\[
\parallel f'_y(\cdot,\cdot,y,u)\parallel_{L^{\infty}(\Omega_T)} \leq C, \quad \parallel f''_{yy}(\cdot,\cdot,y,u)\parallel_{L^{\infty}(\Omega_T)} \leq C,
\]  
\[
\parallel F'_y(\cdot,\cdot,y,u)\parallel_{L^{\infty}(\Omega_T)} \leq C, \quad \parallel F''_{yy}(\cdot,\cdot,y,u)\parallel_{L^{\infty}(\Omega_T)} \leq C,
\]  
\[
\parallel L'_y(\cdot,y)\parallel_{L^{\infty}(\Omega)} \leq C, \quad \parallel L''_{yy}(\cdot,y)\parallel_{L^{\infty}(\Omega)} \leq C,
\]  
\[
\parallel G'_y(\cdot,\cdot,y,v)\parallel_{L^{\infty}(\Sigma_T)} \leq C, \quad \parallel G''_{yy}(\cdot,\cdot,y,v)\parallel_{L^{\infty}(\Omega_T)} \leq C,
\]  
where \( C> 0 \) is a constant.  
\end{assumption}
Before deriving the error estimate, we first establish upper bounds for \( p^{\theta} \) and \( \delta y \) under their respective norms. These bounds are obtained through the following two lemmas. Here, we denote \( y^{\theta} := y(u^{\theta}, v^{\theta}) \), \( y^{\phi} := y(u^{\phi}, v^{\phi}) \), \( \delta y := y^{\phi} - y^{\theta} \), and \( p^{\theta} := p(u^{\theta}, v^{\theta}, y^{\theta}) \).
\begin{lemma}
Let Assumption 1 fulfilled. Then, there exists a constant $C_1$ such that\\ 
\begin{equation}
\begin{aligned}
	\parallel p^{\theta}\parallel_{L^{\infty}(\Omega_T)}\leq C_1.
\end{aligned}
\label{4}
\end{equation}
\end{lemma}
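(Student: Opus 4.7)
The plan is to convert the backward adjoint problem (3) (specialized at $(y^{\theta}, u^{\theta}, v^{\theta})$) into a standard forward parabolic equation via time reversal, observe that under Assumption 2 all coefficients and data are bounded in $L^{\infty}$ uniformly in the controls, and then apply a classical $L^{\infty}$-estimate / comparison argument for linear parabolic equations with Neumann boundary conditions.

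Concretely, set $\tilde p(x,\tau) := p^{\theta}(x, T-\tau)$ and write $c(x,\tau) := f'_{y}(x, T{-}\tau, y^{\theta}, u^{\theta})$, $\mathcal F(x,\tau) := F'_{y}(x, T{-}\tau, y^{\theta}, u^{\theta})$, $\mathcal G(s,\tau) := G'_{y}(s, T{-}\tau, y^{\theta}, v^{\theta})$, and $p_{T}(x) := L'_{y}(x, y^{\theta}(\cdot,T))$. Then $\tilde p$ solves the forward problem
\begin{equation*}
\tilde p_{\tau} + A\tilde p + c\,\tilde p = \mathcal F \text{ in }\Omega_T,\qquad \partial_{n_A}\tilde p = \mathcal G \text{ on }\Sigma_T,\qquad \tilde p(\cdot,0) = p_{T} \text{ in }\Omega.
\end{equation*}
By Assumption 2, the four quantities $\|c\|_{L^{\infty}(\Omega_T)}$, $\|\mathcal F\|_{L^{\infty}(\Omega_T)}$, $\|\mathcal G\|_{L^{\infty}(\Sigma_T)}$, $\|p_{T}\|_{L^{\infty}(\Omega)}$ are dominated by a constant $C$ that is independent of the choice of controls $(u^{\theta},v^{\theta})$, and by Assumption 1 we further have $c \geq C_{0} > 0$ a.e.

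The core step is a comparison argument. Because the Neumann data $\mathcal G$ is not identically zero, a constant function is not directly a supersolution, so I would first introduce a lifting $\psi \in C^{2,\beta}(\overline\Omega_T)$ satisfying $\partial_{n_A}\psi = -\mathcal G$ on $\Sigma_T$ together with $\|\psi\|_{L^{\infty}(\Omega_T)} + \|\psi_{\tau}\|_{L^{\infty}(\Omega_T)} + \|A\psi\|_{L^{\infty}(\Omega_T)} \leq C\|\mathcal G\|_{L^{\infty}(\Sigma_T)}$; such a $\psi$ exists because $\partial\Omega \in C^{2,\beta}$. Writing $\rho := \tilde p + \psi$ transforms the system into a homogeneous-Neumann problem with modified source and initial data, both still $L^{\infty}$-bounded. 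Then, since $c \geq C_0 > 0$, the constant function $w \equiv M$ is a supersolution whenever $C_{0}M$ dominates the modified $L^{\infty}$-norm of the source and $M$ dominates the modified initial data; by the parabolic weak maximum principle for Neumann problems this yields $\rho \leq M$, and symmetrically $\rho \geq -M$, hence $\|\tilde p\|_{L^{\infty}} \leq M + \|\psi\|_{L^{\infty}} =: C_{1}$. Undoing the time reversal gives $\|p^{\theta}\|_{L^{\infty}(\Omega_T)} \leq C_{1}$, as required.

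The main obstacle is the inhomogeneous Neumann boundary condition: the cleanest pointwise maximum-principle statements assume homogeneous data, which is why the lifting $\psi$ is needed, and verifying that it can be chosen with controlled $L^{\infty}$-norms really does use the $C^{2,\beta}$-regularity of $\partial\Omega$ assumed in Section 2.1. An equivalent but less explicit route is to skip the lifting entirely and invoke the $L^{\infty}$-well-posedness of adjoint problems developed in Raymond \cite{Raymond1999H} (already cited for Theorem 2.1); under Assumption 2 it provides a bound of the form $\|p\|_{L^{\infty}} \leq C(\|\mathcal F\|_{L^{\infty}} + \|\mathcal G\|_{L^{\infty}} + \|p_{T}\|_{L^{\infty}})$ directly, from which the stated uniform constant $C_{1}$ follows.
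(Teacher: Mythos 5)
Your proposal takes a genuinely different route from the paper: the paper's entire proof is a one-line appeal to Proposition~4.1 of Raymond--Zidani \cite{Raymond1999H} (an $L^\infty$ well-posedness result for the adjoint system with $L^r$/$L^q$ data), whereas you attempt a self-contained argument via time reversal, a Neumann lifting, and a comparison/maximum principle. The time reversal and the identification of the uniformly bounded data $c,\mathcal F,\mathcal G,p_T$ are correct, and the overall comparison strategy is sound in spirit. Note, though, that you invoke Assumption~2 to get the $L^\infty$ bounds on the data, while the lemma is stated under Assumption~1 alone; the paper's citation covers that weaker hypothesis (Raymond's result only needs the growth conditions together with $u^\theta\in L^r$, $v^\theta\in L^q$). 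In the context where the lemma is actually used (Lemma~3.3 assumes both), this costs nothing, but strictly speaking you prove a weaker statement.

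The concrete gap is in the lifting step. You require $\psi\in C^{2,\beta}(\overline\Omega_T)$ with $\partial_{n_A}\psi=-\mathcal G$ on $\Sigma_T$. But $\mathcal G(s,\tau)=G'_y(s,T{-}\tau,y^\theta,v^\theta)$ depends on $v^\theta\in L^q(\Sigma_T)$ and is therefore only a bounded \emph{measurable} function on $\Sigma_T$, whereas the conormal derivative of a $C^{2,\beta}$ function is continuous on the boundary; such a $\psi$ does not exist in general, and no amount of boundary regularity of $\partial\Omega$ repairs this. The fix is standard but changes the construction: instead of lifting $\mathcal G$ exactly, lift the constant $\|\mathcal G\|_{L^\infty(\Sigma_T)}$ (choose $\psi\ge 0$ smooth with $\partial_{n_A}\psi=\|\mathcal G\|_{L^\infty}$), so that $z=\tilde p-\psi-M$ satisfies $\partial_{n_A}z\le 0$ in the weak sense and a one-sided weak maximum principle (testing with $z^+$) gives $z\le 0$; argue symmetrically for the lower bound. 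Alternatively, a Stampacchia truncation argument handles the inhomogeneous measurable Neumann data directly --- which is essentially what the cited Proposition~4.1 does, so your stated fallback of invoking \cite{Raymond1999H} is not merely ``equivalent but less explicit'': it is the paper's actual proof.
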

\begin{proof}
Refer to Proposition 4.1 of \cite{Raymond1999H}, along with the embedding \( L^r(\Omega_T) \hookrightarrow L^{\infty}(\Omega_T) \) and \( L^s(\Sigma_T) \hookrightarrow L^{\infty}(\Sigma_T) \), to obtain the result.
\end{proof}
   \begin{lemma}
   	Let Assumption 1 and Assumption 2 be fulfilled. Then, there exists constants $C_2>0$ such that\\ 
   	\begin{equation}
   	 \begin{aligned}
   	 	&\parallel\delta y\parallel^2_{L^2(\Omega_T)}+\parallel\delta y\parallel^2_{L^2(\Sigma_T)}\leq C_2\big(  \parallel v^{\phi}-v^{\theta}\parallel^2_{L^2(\Sigma_T)}\\
   	 	&+\parallel f(\cdot,\cdot, y^{\theta}, u^{\phi})-f(\cdot,\cdot, y^{\theta}, u^{\theta})\parallel^2_{L^2(\Omega_T)}\big).
   	 \end{aligned}
   	 \label{5}
   	    	\end{equation}
   \end{lemma}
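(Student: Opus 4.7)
The plan is to subtract the state equations for $y^{\phi}$ and $y^{\theta}$, then derive an energy identity for $\delta y$ and close it via Gronwall's inequality. Subtracting the two state systems gives
\begin{equation*}
\frac{\partial \delta y}{\partial t} + A\,\delta y + \bigl[f(\cdot,\cdot,y^{\phi},u^{\phi}) - f(\cdot,\cdot,y^{\theta},u^{\phi})\bigr] = -\bigl[f(\cdot,\cdot,y^{\theta},u^{\phi}) - f(\cdot,\cdot,y^{\theta},u^{\theta})\bigr],
\end{equation*}
together with $\partial_{n_A}\delta y = -(v^{\phi}-v^{\theta})$ on $\Sigma_T$ and $\delta y(\cdot,0)=0$. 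By the mean value theorem, $f(\cdot,\cdot,y^{\phi},u^{\phi}) - f(\cdot,\cdot,y^{\theta},u^{\phi}) = a(x,t)\,\delta y$ with $a := \int_0^1 f'_y(\cdot,\cdot,y^{\theta}+s\,\delta y,u^{\phi})\,ds$, and by Assumption~2 we have $\|a\|_{L^{\infty}(\Omega_T)}\le C$. Denote the right-hand side data by $g := f(\cdot,\cdot,y^{\theta},u^{\phi}) - f(\cdot,\cdot,y^{\theta},u^{\theta}) \in L^2(\Omega_T)$.

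Next I would test the equation with $\delta y$ on $\Omega$. Integration by parts in the operator $A$ produces the quadratic form controlled below by $K\|\nabla\delta y\|^2_{L^2(\Omega)}$ (using uniform ellipticity) plus a boundary contribution $\int_{\partial\Omega}(v^{\phi}-v^{\theta})\,\delta y\,ds$ coming from the Neumann condition. Thus
\begin{equation*}
\tfrac{1}{2}\tfrac{d}{dt}\|\delta y\|^2_{L^2(\Omega)} + K\|\nabla\delta y\|^2_{L^2(\Omega)} \le C\|\delta y\|^2_{L^2(\Omega)} + \bigl|(v^{\phi}-v^{\theta},\delta y)_{\partial\Omega}\bigr| + \bigl|(g,\delta y)_{\Omega}\bigr|.
\end{equation*}
The boundary term is treated by Young's inequality together with the trace estimate $\|\delta y\|_{L^2(\partial\Omega)} \le C_{\mathrm{tr}}\bigl(\|\delta y\|_{L^2(\Omega)}+\|\nabla\delta y\|_{L^2(\Omega)}\bigr)$, with the parameter chosen small enough that the $\|\nabla\delta y\|^2_{L^2(\Omega)}$ contribution is absorbed into the elliptic term. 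The interior term $|(g,\delta y)_{\Omega}|$ is handled by standard Young's inequality.

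Integrating the resulting differential inequality on $[0,t]$ and applying Gronwall's lemma (using $\delta y(\cdot,0)=0$) yields the bound
\begin{equation*}
\|\delta y\|^2_{L^{\infty}(0,T;L^2(\Omega))} + \|\delta y\|^2_{L^2(0,T;H^1(\Omega))} \le C\Bigl(\|v^{\phi}-v^{\theta}\|^2_{L^2(\Sigma_T)} + \|g\|^2_{L^2(\Omega_T)}\Bigr).
\end{equation*}
This immediately gives the $L^2(\Omega_T)$ part of the claim. For the $L^2(\Sigma_T)$ part I would apply the trace inequality in each time slice and integrate in $t$, obtaining $\|\delta y\|^2_{L^2(\Sigma_T)} \le C\|\delta y\|^2_{L^2(0,T;H^1(\Omega))}$, which is controlled by the same right-hand side. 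Setting $C_2$ to be the resulting constant concludes the estimate.

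The main technical point I expect to be delicate is the bookkeeping of the boundary term: one must apply Young's inequality and the trace inequality with a sufficiently small parameter so that the induced $\|\nabla\delta y\|^2_{L^2(\Omega)}$ piece is absorbed into the coercive term on the left, leaving only the data $\|v^{\phi}-v^{\theta}\|^2_{L^2(\partial\Omega)}$ free; everything else is routine energy-estimate machinery relying on uniform ellipticity, the $L^{\infty}$-bound on $f'_y$ from Assumption~2, and Gronwall's inequality.
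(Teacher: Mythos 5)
Your proposal is correct and follows essentially the same route as the paper: test the difference equation with $\delta y$, integrate by parts to expose the Neumann data $v^{\phi}-v^{\theta}$, absorb the boundary and gradient contributions via Young's inequality and the trace embedding $H^1(\Omega)\hookrightarrow L^2(\partial\Omega)$ with a small parameter, split the nonlinearity using the $L^{\infty}$-bound on $f'_y$, and close with Gronwall. Your write-up is in fact slightly more careful than the paper's (explicit mean-value representation $a(x,t)\,\delta y$ and explicit absorption of the $\|\nabla\delta y\|^2$ piece), but there is no substantive difference in method.
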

   \begin{proof}By multiplying both sides of the parabolic equation \eqref{2} corresponding to \( \delta y \) by \( \delta y \) and integrating over \( \Omega \), we obtain 
   	\begin{align*}
   	&(\frac{\partial \delta y}{\partial t},\delta y)_\Omega + (A\delta y,\delta y)_\Omega + (f(\cdot, \cdot, y^{\phi}, u^{\phi})-f(\cdot, \cdot, y^{\theta}, u^{\theta}),\delta y)_\Omega=0,\\
    &(\frac{\partial \delta y}{\partial t},\delta y)_\Omega - \int_\Omega \sum_{i, j=1}^N \partial_{x_j} \big(a_{ij}(x) \partial_{x_i} \delta y\big) \delta y dx\\ 
    &+ (f(\cdot, \cdot, y^{\phi}, u^{\phi})-f(\cdot, \cdot, y^{\theta}, u^{\theta}),\delta y)_\Omega=0,\\
    &(\frac{\partial \delta y}{\partial t},\delta y)_\Omega = -(v^{\phi}-v^{\theta}, \delta y)_\Gamma-\int_\Omega \sum_{i, j=1}^N  a_{ij}(x) \partial_{x_i} \delta y \partial_{x_j}\delta y dx \\
    &-(f(\cdot, \cdot, y^{\phi}, u^{\phi})-f(\cdot, \cdot, y^{\theta}, u^{\theta}),\delta y)_\Omega.    
   	\end{align*}
   	Since $A$ is a uniform elliptic operator, there is\\
   	\begin{align*}
   	\int_\Omega \sum_{i, j=1}^N  a_{ij}(x) \partial_{x_i} \delta y \partial_{x_j}\delta y dx \geq K \parallel \nabla_x \delta y \parallel^2_{L^2(\Omega)}.
   	\end{align*}
   	Therefore
   	\begin{equation}
   	\begin{aligned}
   		&(\frac{\partial \delta y}{\partial t},\delta y)_\Omega+K \parallel \nabla_x \delta y \parallel^2_{L^2(\Omega)} \\
   		&\leq -(v^{\phi}-v^{\theta}, \delta y)_\Gamma -(f(\cdot, \cdot, y^{\phi}, u^{\phi})-f(\cdot, \cdot, y^{\theta}, u^{\theta}),\delta y)_\Omega, 
   	\end{aligned}
   	\label{6}
   	   	\end{equation}
   	where we used Young's inequality for \eqref{5} and $\epsilon < \frac{K}{2C}$, we get
   	\begin{align*}
   		&(\frac{\partial \delta y}{\partial t},\delta y)_\Omega+K \parallel \delta y \parallel^2_{H^1(\Omega)} \leq \frac{1}{\epsilon}\parallel v^{\phi}-v^{\theta}\parallel^2_{L^2(\Gamma)}+\epsilon\parallel \delta y\parallel^2_{L^2(\Gamma)} \\
   		&+\frac{1}{2}\parallel f(\cdot, \cdot, y^{\phi}, u^{\phi})-f(\cdot, \cdot, y^{\theta}, u^{\theta})\parallel^2_{L^2(\Omega)}+(\frac{1}{2}+K)\parallel \delta y\parallel^2_{L^2(\Omega)}.
   		   	\end{align*}
   		   	From the trace embedding theorem, we know $H^1(\Omega)\hookrightarrow L^2(\Gamma)$
   		   	\begin{align*}
   		   		\parallel \delta y\parallel^2_{L^2(\Gamma)}\leq C\parallel \delta y\parallel^2_{H^1(\Omega)},
   		   	\end{align*}
   		 which implies
   		   	\begin{align*}
   		&\frac{\partial}{\partial t}\parallel\delta y\parallel^2_{L^2(\Omega)}+K \parallel \delta y \parallel^2_{H^1(\Omega)} \leq\frac{2}{\epsilon}\parallel v^{\phi}-v^{\theta}\parallel^2_{L^2(\Gamma)}\\
   		&+\parallel f(\cdot, \cdot, y^{\phi}, u^{\phi})-f(\cdot,\cdot, y^{\theta}, u^{\theta})\parallel^2_{L^2(\Omega)}+(1+2K)\parallel \delta y\parallel^2_{L^2(\Omega)}\\
   		& \leq\frac{2}{\epsilon}\parallel v^{\phi}-v^{\theta}\parallel^2_{L^2(\Gamma)}
   		+\parallel f(\cdot, \cdot, y^{\theta}, u^{\phi})-f(\cdot, \cdot, y^{\theta}, u^{\theta})\parallel^2_{L^2(\Omega)}\\
   		&+(1+2K+C)\parallel \delta y\parallel^2_{L^2(\Omega)}.
   	\end{align*}
   	Thus the differential form of Gronwall's inequality yields the following estimate
   	\begin{align*}
    &\max_{0\leq t\leq T}\parallel\delta y\parallel^2_{L^2(\Omega)} +C\parallel \delta y \parallel^2_{L^2(0,T;H^1(\Omega))} \leq C\parallel v^{\phi}-v^{\theta}\parallel^2_{L^2(\Sigma_T)}\\
    &+C\parallel f(\cdot, \cdot, y^{\theta}, u^{\phi})-f(\cdot, \cdot, y^{\theta}, u^{\theta})\parallel^2_{L^2(\Omega_T)}.
   \end{align*}
   Since the embedding $L^{\infty}(0,T,L^2(\Omega))\hookrightarrow L^2(\Omega_T)$
   \begin{align*}
\parallel\delta y\parallel^2_{L^2(\Omega_T)} \leq C\max_{0\leq t\leq T}\parallel\delta y\parallel^2_{L^2(\Omega)},
   \end{align*}
   we proves
   \begin{align*}
   	&\parallel\delta y\parallel^2_{L^2(\Omega_T)}+\parallel\delta y\parallel^2_{L^2(\Sigma_T)}\leq C_2\big(\parallel v^{\phi}-v^{\theta}\parallel^2_{L^2(\Sigma_T)}\\
   	&+\parallel f(\cdot, \cdot, y^{\theta}, u^{\phi})-f(\cdot, \cdot,  y^{\theta}, u^{\theta})\parallel^2_{L^2(\Omega_T)}\big).
   \end{align*}
   \end{proof}
    \begin{lemma}
    	Suppose Assumption 1 and Assumption 2 holds. Then there exists a constant $\tilde{C}>0$ such that for any control variables $u^{\phi},u^{\theta}\in \mathcal{U}$, $v^{\phi},v^{\theta}\in \mathcal{V}$,
    	\begin{equation}
        		\begin{aligned}
    	J(u^{\phi},v^{\phi})-J(u^{\theta},v^{\theta})&\leq\int_{\Omega_T}[H_{\Omega_T}(y^{\theta}, u^{\phi})-H_{\Omega_T}(y^{\theta}, u^{\theta})]dxdt\\
    	&+\int_{\Sigma_T}[H_{\Sigma_T}(y^{\theta}, u^{\phi})-H_{\Sigma_T}(y^{\theta}, u^{\theta})]dsdt\\
    	&+\tilde{C}\big(\parallel u^{\phi}-u^{\theta}\parallel^2_{L^2(\Omega_T)}+\parallel v^{\phi}-v^{\theta}\parallel^2_{L^2(\Sigma_T)}\big).
    \end{aligned}
    \label{7}
\end{equation}
    \end{lemma}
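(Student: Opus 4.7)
The plan is to decompose $J(u^\phi, v^\phi) - J(u^\theta, v^\theta)$ term by term, Taylor-expand in the state variable around $y^\theta$, and then use the adjoint equation for $p^\theta$ to rearrange the linear-in-$\delta y$ contribution into the advertised Hamiltonian differences plus a quadratic remainder that is absorbed via Lemma 3.2 (together with Lemma 3.1 for the $L^\infty$ bound on $p^\theta$).

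First, for each of the three terms in $J$ I would split $F(y^\phi, u^\phi) - F(y^\theta, u^\theta) = [F(y^\theta, u^\phi) - F(y^\theta, u^\theta)] + [F(y^\phi, u^\phi) - F(y^\theta, u^\phi)]$, and analogously for $G$ on $\Sigma_T$ and for $L$ at time $T$. Taylor's theorem with integral remainder applied to the second bracket in $y$, combined with the $L^\infty$-bound on $F''_{yy}$ from Assumption 2, produces a first-order term $F'_y(y^\theta, u^\phi)\delta y$ plus an $O(|\delta y|^2)$ pointwise remainder (and likewise for $G$ and $L$). Adding and subtracting $p^\theta f(y^\theta, u)$ and $p^\theta v$ in the first brackets rewrites them as $\int_{\Omega_T}[H_{\Omega_T}(y^\theta, u^\phi) - H_{\Omega_T}(y^\theta, u^\theta)]\,dx\,dt + \int_{\Omega_T} p^\theta[f(y^\theta, u^\phi) - f(y^\theta, u^\theta)]\,dx\,dt$ and analogously on $\Sigma_T$; the first summands are already the Hamiltonian-difference terms appearing in the statement, and the second summands are ``$p^\theta$-cross'' terms to be cancelled by the adjoint identity.

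The crucial step is the adjoint identity. Multiplying the difference-of-states equation $(\delta y)_t + A\delta y = -[f(y^\phi, u^\phi) - f(y^\theta, u^\theta)]$ by $p^\theta$, integrating over $\Omega_T$, and performing integration by parts in time (using $\delta y(\cdot, 0) = 0$ and $p^\theta(\cdot, T) = L'_y(y^\theta(T))$) and in space (using the symmetry of $A$ together with $\partial \delta y/\partial n_A = -(v^\phi - v^\theta)$ and $\partial p^\theta/\partial n_A = G'_y(y^\theta, v^\theta)$), then substituting the adjoint PDE $-(p^\theta)_t + A p^\theta + f'_y(y^\theta, u^\theta)\,p^\theta = F'_y(y^\theta, u^\theta)$, produces a relation that equates $\int_\Omega L'_y\delta y(T) + \int_{\Omega_T} F'_y(y^\theta, u^\theta)\delta y + \int_{\Sigma_T} G'_y(y^\theta, v^\theta)\delta y$ to the ``$p^\theta$-cross'' integrals $\int_{\Omega_T} p^\theta[f(y^\phi, u^\phi) - f(y^\theta, u^\theta)]$ and $\int_{\Sigma_T} p^\theta(v^\phi - v^\theta)$, modulo a single $\int f'_y(y^\theta, u^\theta)p^\theta\delta y$ term. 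Writing $f(y^\phi, u^\phi) - f(y^\theta, u^\theta) = [f(y^\theta, u^\phi) - f(y^\theta, u^\theta)] + f'_y(y^\theta, u^\phi)\delta y + O(|\delta y|^2)$ and using $\|p^\theta\|_{L^\infty} \leq C_1$ from Lemma 3.1, the $f'_y p^\theta\delta y$ pieces essentially cancel, and the $p^\theta[f(y^\theta, u^\phi) - f(y^\theta, u^\theta)]$ piece slots in against the Hamiltonian-extraction cross term from the previous paragraph.

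Assembling everything yields $J^\phi - J^\theta = $ (the two Hamiltonian-difference integrals) $ + \mathcal{R}$, where $\mathcal{R}$ consists of the Taylor quadratic remainders (all dominated by $\|\delta y\|^2_{L^2(\Omega_T)} + \|\delta y\|^2_{L^2(\Sigma_T)} + \|\delta y(T)\|^2_{L^2(\Omega)}$) together with ``mismatch'' terms such as $\int[F'_y(y^\theta, u^\phi) - F'_y(y^\theta, u^\theta)]\delta y$ that arise because the cost Taylor expansion naturally evaluates derivatives at the ``$\phi$'' controls while the adjoint equation uses the ``$\theta$'' controls. Young's inequality, Lemma 3.1, and a Lipschitz-in-control estimate on the $y$-partial derivatives of $f$, $F$, $G$ (implicit in the underlying regularity framework) bound each mismatch term by $C(\|u^\phi - u^\theta\|^2_{L^2} + \|\delta y\|^2_{L^2})$, and Lemma 3.2 together with the terminal-time Gronwall bound established in its proof finally converts the $\|\delta y\|^2$ contributions into $C(\|u^\phi - u^\theta\|^2_{L^2(\Omega_T)} + \|v^\phi - v^\theta\|^2_{L^2(\Sigma_T)})$. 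The main obstacle I anticipate is the careful sign bookkeeping in the boundary integration by parts and, more subtly, showing that the mismatch terms are quadratically small in the control difference: this requires Lipschitz-type continuity of the coefficient partial derivatives with respect to the control variables beyond what Assumption 2 records explicitly, and will need to be invoked (or absorbed into the assumption set) for the estimate to close.
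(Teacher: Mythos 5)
Your proposal follows essentially the same route as the paper: split $J(u^{\phi},v^{\phi})-J(u^{\theta},v^{\theta})$ into the $F$, $G$, $L$ contributions, Taylor-expand in $y$ about $y^{\theta}$, use the duality pairing of $p^{\theta}$ with the state-difference equation (the paper writes this as $\int_{\Omega_T}\tfrac{\partial}{\partial t}(p^{\theta}\delta y)\,dx\,dt$ and substitutes both the state and adjoint PDEs) to extract the Hamiltonian differences plus a boundary term, and then absorb the quadratic and mismatch remainders via Young's inequality, Lemma 3.1 and Lemma 3.2. The Lipschitz-in-control issue you flag at the end is a genuine point, but it is equally present and unaddressed in the paper's own proof, which silently bounds $\|\nabla_y H_{\Omega_T}(y^{\theta},u^{\phi})-\nabla_y H_{\Omega_T}(y^{\theta},u^{\theta})\|_{L^2(\Omega_T)}$ and $\|f(\cdot,\cdot,y^{\theta},u^{\phi})-f(\cdot,\cdot,y^{\theta},u^{\theta})\|_{L^2(\Omega_T)}$ by $C\|u^{\phi}-u^{\theta}\|_{L^2(\Omega_T)}$ without an explicit assumption justifying it.
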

    \begin{proof}By considering the difference \( J(u^{\phi},v^{\phi}) - J(u^{\theta},v^{\theta}) \), the right-hand side of the equation below can be divided into three parts
    	\begin{equation}
    	\begin{aligned}
    	    &J(u^{\phi},v^{\phi})-J(u^{\theta},v^{\theta})=\underbrace{\int_{\Omega_T} F(y^{\phi}, u^{\phi})-F(y^{\theta}, u^{\theta})dxdt}_{(A)}\\ 
    	    &+ \underbrace{\int_{\Sigma_T}G(y^{\phi}, v^{\phi})-G(y^{\theta}, v^{\theta})dsdt}_{(B)} 
    	    + \underbrace{\int_{\Omega} L(y^{\phi}(T))- L(y^{\theta}(T))dx}_{(C)}.
    	\end{aligned}
    	\label{8}
    		\end{equation}
    The third part \( (C) \) can be estimated using Hölder's inequality yielding
    	\begin{align*}
    	(C) &= \int_{\Omega}L'(y^{\theta}(T))\delta y(T) +\frac{1}{2}L'_{yy}(\xi_1)(\delta y(T))^2dx\\
    	&\leq \int_{\Omega}p^{\theta}(T)\delta y(T)dx+C\parallel\delta y\parallel^2_{L^2(\Omega_T)}\\
    	&\leq \underbrace{\int_{\Omega_T}\frac{\partial}{\partial t}(p^{\theta}\delta y)dxdt}_{(D)}+C\parallel\delta y\parallel^2_{L^2(\Omega_T)}.
    	\end{align*}
    		Adding \( A \) and \( D \), and utilizing \eqref{2}, \eqref{3}, along with the Taylor expansion, we obtain
    	\begin{align*}
    		(A)+(D)&=\int_{\Omega_T}F(y^{\phi}, u^{\phi})-F(y^{\theta}, u^{\theta})+\frac{\partial}{\partial t}(p^{\theta}\delta y)dxdt\\
    		&=\int_{\Omega_T}F(y^{\theta}, u^{\phi})-F(y^{\theta}, u^{\theta})+F'_y(y^{\theta}, u^{\phi})\delta y\\
    		&+\frac{1}{2}F'_{yy}(\xi_2,u^{\phi})(\delta y)^2+\frac{\partial}{\partial t}p^{\theta}\delta y+p^{\theta}\frac{\partial}{\partial t}\delta ydxdt\\
    		&\leq\int_{\Omega_T}F(y^{\theta}, u^{\phi})-F(y^{\theta}, u^{\theta})+F'_y(y^{\theta}, u^{\phi})\delta y\\
    		&+ f'_y(y^{\theta},u^{\theta})p^{\theta}\delta y- F'_y(y^{\theta}, u^{\theta})\delta y-p^{\theta}[f(y^{\phi}, u^{\phi})-f(y^{\theta},u^{\theta})]\\
    		&-\sum_{i, j=1}^N \partial_{x_j} \big(a_{ij}(x) \partial_{x_i} p^{\theta}\big) \delta y+\sum_{i, j=1}^N \partial_{x_j} \big(a_{ij}(x) \partial_{x_i}  \delta y\big)p^{\theta}\\
    		&+\frac{1}{2}F'_{yy}(\xi_2,u^{\phi})(\delta y)^2dxdt\\
    		&\leq \underbrace{-\int_{\Sigma_T}G'_y(y^{\theta}, v^{\theta})\delta y+p^{\theta}(v^{\phi}-v^{\theta})dsdt}_{(E)}\\
    		&+\int_{\Omega_T}F(y^{\theta}, u^{\phi})-F(y^{\theta}, u^{\theta})-p^{\theta}[f(y^{\theta}, u^{\phi})-f(y^{\theta},u^{\theta})]\\
    		&+F'_y(y^{\phi}, u^{\theta})\delta y-F'_y(y^{\theta}, u^{\theta})\delta y-[f'_y(y^{\theta}, u^{\phi})-f'_y(y^{\theta},u^{\theta})]p^{\theta}\delta y\\
    		&+\frac{1}{2}F'_{yy}(\xi_2,u^{\phi})(\delta y)^2-\frac{1}{2}p^{\theta}F'_{yy}(\xi_3,u^{\phi})(\delta y)^2dxdt.  		
    	\end{align*}
    	Using the definition of the Hamiltonian, Lemma 3.1 and Hölder's inequality, we obtain the following estimates
    	\begin{align*}
    	(A)+(D)&\leq(E)+\int_{\Omega_T}[H_{\Omega_T}(y^{\theta}, u^{\phi})-H_{\Omega_T}(y^{\theta}, u^{\theta})]\\
    	&+[\nabla_yH_{\Omega_T}(y^{\theta}, u^{\phi})-\nabla_yH_{\Omega_T}(y^{\theta}, u^{\theta})]\delta ydxdt+C\parallel\delta y\parallel^2_{L^2(\Omega_T)}.    		
    	\end{align*}
    Using a similar approach as above, we can derive an estimate for \( (B) + (E) \)
    	\begin{align*}
    	(B)+(E)&=\int_{\Sigma_T}G(y^{\phi}, v^{\phi})-G(y^{\theta}, v^{\theta})-G'_y(y^{\theta}, v^{\theta})\delta y-p^{\theta}(v^{\phi}-v^{\theta})dsdt\\
    	&=\int_{\Sigma_T}G(y^{\theta}, v^{\phi})-G(y^{\theta}, v^{\theta})-p^{\theta}(v^{\phi}-v^{\theta})\\
    	&+G'_y(y^{\theta}, v^{\phi})\delta y-G'_y(y^{\theta}, v^{\theta})\delta y+\frac{1}{2}G'_{yy}(\xi_4, v^{\theta})(\delta y)^2dsdt\\
    	&\leq \int_{\Sigma_T}[H_{\Sigma_T}(y^{\theta}, u^{\phi})-H_{\Sigma_T}(y^{\theta}, u^{\theta})]\\
    	&+[\nabla_yH_{\Sigma_T}(y^{\theta}, u^{\phi})-\nabla_yH_{\Sigma_T}(y^{\theta}, u^{\theta})]\delta ydsdt+C\parallel\delta y\parallel^2_{L^2(\Sigma_T)}.    		
    	\end{align*}
    	Substituting the estimates of \( A + B + C \) into \eqref{8}, and applying Young's inequality and Lemma 3.2, we derive
    	\begin{align*}
    	&J(u^{\phi},v^{\phi})-J(u^{\theta},v^{\theta})\leq\int_{\Omega_T}[H_{\Omega_T}(y^{\theta}, u^{\phi})-H_{\Omega_T}(y^{\theta}, u^{\theta})]\\
    	&+[\nabla_yH_{\Omega_T}(y^{\theta}, u^{\phi})-\nabla_yH_{\Omega_T}(y^{\theta}, u^{\theta})]\delta ydxdt\\
    	&+\int_{\Sigma_T}[H_{\Sigma_T}(y^{\theta}, u^{\phi})-H_{\Sigma_T}(y^{\theta}, u^{\theta})]\\
    	&+[\nabla_yH_{\Sigma_T}(y^{\theta}, u^{\phi})-\nabla_yH_{\Sigma_T}(y^{\theta}, u^{\theta})]\delta ydsdt\\
    	&+C\big(\parallel\delta y\parallel^2_{L^2(\Omega_T)}+\parallel\delta y\parallel^2_{L^2(\Sigma_T)}\big)\\
    	&\leq\int_{\Omega_T}[H_{\Omega_T}(y^{\theta}, u^{\phi})-H_{\Omega_T}(y^{\theta}, u^{\theta})]dxdt\\
    	&+\int_{\Sigma_T}[H_{\Sigma_T}(y^{\theta}, u^{\phi})-H_{\Sigma_T}(y^{\theta}, u^{\theta})]dsdt\\
    	&+\frac{1}{2}\parallel\nabla_yH_{\Omega_T}(y^{\theta}, u^{\phi})-\nabla_yH_{\Omega_T}(y^{\theta}, u^{\theta})\parallel^2_{L^2(\Omega_T)}\\
    	&+\frac{1}{2}\parallel\nabla_yH_{\Sigma_T}(y^{\theta}, u^{\phi})-\nabla_yH_{\Sigma_T}(y^{\theta}, u^{\theta})\parallel^2_{L^2(\Sigma_T)}\\
    	&+C\big(\parallel\delta y\parallel^2_{L^2(\Omega_T)}+\parallel\delta y\parallel^2_{L^2(\Sigma_T)}\big)\\
    	&\leq\int_{\Omega_T}[H_{\Omega_T}(y^{\theta}, u^{\phi})-H_{\Omega_T}(y^{\theta}, u^{\theta})]dxdt\\
    	&+\int_{\Sigma_T}[H_{\Sigma_T}(y^{\theta}, u^{\phi})-H_{\Sigma_T}(y^{\theta}, u^{\theta})]dsdt\\
    	&+C\big(\parallel\nabla_yH_{\Omega_T}(y^{\theta}, u^{\phi})-\nabla_yH_{\Omega_T}(y^{\theta}, u^{\theta})\parallel^2_{L^2(\Omega_T)}\\
    	&+\parallel\nabla_pH_{\Omega_T}(y^{\theta}, u^{\phi})-\nabla_pH_{\Omega_T}(y^{\theta}, u^{\theta})\parallel^2_{L^2(\Omega_T)}\\
    	&+\parallel\nabla_yH_{\Sigma_T}(y^{\theta}, u^{\phi})-\nabla_yH_{\Sigma_T}(y^{\theta}, u^{\theta})\parallel^2_{L^2(\Sigma_T)}\\
    	&\parallel\nabla_pH_{\Sigma_T}(y^{\theta}, u^{\phi})-\nabla_pH_{\Sigma_T}(y^{\theta}, u^{\theta})\parallel^2_{L^2(\Sigma_T)}
    	\big).
    	\end{align*}
    	Using Lemma 3.1, Lemma 3.2, and Hölder's inequality, we derive the final estimate
    		\begin{align*}
    				J(u^{\phi},v^{\phi})-J(u^{\theta},v^{\theta})&\leq\int_{\Omega_T}[H_{\Omega_T}(y^{\theta}, u^{\phi})-H_{\Omega_T}(y^{\theta}, u^{\theta})]dxdt\\
    			&+\int_{\Sigma_T}[H_{\Sigma_T}(y^{\theta}, u^{\phi})-H_{\Sigma_T}(y^{\theta}, u^{\theta})]dsdt\\
    			&+\tilde{C}\big(\parallel u^{\phi}-u^{\theta}\parallel^2_{L^2(\Omega_T)}+\parallel v^{\phi}-v^{\theta}\parallel^2_{L^2(\Sigma_T)}\big).
    		\end{align*}
    \end{proof}
    From Lemma 3.3, it can be observed that Algorithm \ref{alg:msa} replaces the minimization of the overall control index \( J \) with the minimization of the Hamiltonian. However, if the control gap between the updated control and the control generated in the previous iteration is too large, the overall control index \( J \) may fail to decrease. To address this issue, it is necessary to modify the Hamiltonian optimization step in Algorithm \ref{alg:msa} to ensure that \( J \) decreases after each iteration.
    \subsection{Augmented MSA}
    To guarantee the decrease of the overall control index \( J \), it is crucial to regulate the gap between the updated control variable and the control variable from the previous iteration. Inspired by the augmented Lagrangian method, a penalty term is incorporated into the original Hamiltonian. For a fixed penalty factor \( \rho > 0 \), we define an augmented distributed Hamiltonian function and an augmented boundary Hamiltonian function as follows:
    \begin{equation}
    	\begin{aligned}
    		\tilde{H}_{\Omega_T}(x,t,y,u,p,m):=H_{\Omega_T}(x,t,y,u,p)+\rho(m-u)^2,
    	\end{aligned}
    \end{equation}
        \begin{equation}
    	\begin{aligned}
    		\tilde{H}_{\Sigma_T}(x,t,y,v,p,n):=H_{\Sigma_T}(x,t,y,v,p)+\rho(n-v)^2.
    	\end{aligned}
    \end{equation}
    Next, we present the corresponding first-order necessary condition, referred to as the augmented Pontryagin's principle.
    \begin{proposition}
Suppose \( (\bar{y}, \bar{u}, \bar{v}) \) is the solution of \( (P) \). Then, there exists a unique adjoint state \( \bar{p} \in \mathcal{W}(0,T;L^2(\Omega),H^{1}(\Omega))\cap C(\Omega_T) \), satisfying equation \eqref{3}, such that  
\begin{align*}  
	& \tilde{H}_{\Omega_T}(x,t,\bar{y},\bar{u},\bar{p},\bar{u}) = \min_{u \in U_{ad}} \tilde{H}_{\Omega_T}(x,t,\bar{y},u,\bar{p},\bar{u}), \\  
	& \mathrm{for \; a.e.} \; (x,t) \in \Omega_T, \\  
	& \tilde{H}_{\Sigma_T}(s,t,\bar{y},\bar{v},\bar{p},\bar{v}) = \min_{v \in V_{ad}} \tilde{H}_{\Sigma_T}(s,t,\bar{y},v,\bar{p},\bar{v}), \\  
	& \mathrm{for \; a.e.} \; (s,t) \in \Sigma_T.  
\end{align*}  
    \end{proposition}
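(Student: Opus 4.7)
The plan is to reduce the augmented statement directly to the classical Pontryagin principle (Theorem 2.1), exploiting the fact that the penalty terms $\rho(m-u)^2$ and $\rho(n-v)^2$ evaluated at $m=\bar u$ and $n=\bar v$ vanish precisely at the reference controls and are non-negative elsewhere.

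First, I would invoke Theorem 2.1 to produce an adjoint state $\bar p \in \mathcal{W}(0,T;L^2(\Omega),H^1(\Omega)) \cap C(\Omega_T)$ solving the adjoint system \eqref{3}, along with the pointwise minimization conditions
\[
H_{\Omega_T}(x,t,\bar y,\bar u,\bar p) \le H_{\Omega_T}(x,t,\bar y, u,\bar p) \quad \text{for a.e.\ } (x,t)\in\Omega_T,\ u\in U_{ad},
\]
and the analogous inequality for $H_{\Sigma_T}$ on $\Sigma_T$. This already supplies the adjoint and the baseline minimization, which are the hard analytic ingredients; nothing additional needs to be proved about $\bar p$.

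Next, I would unfold the definition of $\tilde H_{\Omega_T}$ at the reference point $m=\bar u$. For any $u\in U_{ad}$,
\[
\tilde H_{\Omega_T}(x,t,\bar y,u,\bar p,\bar u) = H_{\Omega_T}(x,t,\bar y,u,\bar p) + \rho(\bar u - u)^2,
\]
whereas at $u=\bar u$ the penalty collapses to zero, giving
\[
\tilde H_{\Omega_T}(x,t,\bar y,\bar u,\bar p,\bar u) = H_{\Omega_T}(x,t,\bar y,\bar u,\bar p).
\]
Subtracting and using the Pontryagin inequality together with $\rho(\bar u-u)^2 \ge 0$ yields
\[
\tilde H_{\Omega_T}(x,t,\bar y,u,\bar p,\bar u) - \tilde H_{\Omega_T}(x,t,\bar y,\bar u,\bar p,\bar u) \ge 0
\]
for a.e.\ $(x,t)\in\Omega_T$ and every $u\in U_{ad}$, which is the desired pointwise minimality. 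Repeating this verbatim on the lateral boundary $\Sigma_T$ with $v$, $\bar v$, and $\tilde H_{\Sigma_T}$ handles the boundary condition.

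There is really no substantive obstacle here: the whole content of the proposition is already contained in Theorem 2.1, and the augmentation is engineered so that the added term is a non-negative quadratic vanishing at the optimum. The only care needed is to make sure the penalty argument $m$ (resp.\ $n$) is chosen to equal $\bar u$ (resp.\ $\bar v$), i.e.\ the reference point itself, since for generic $m\neq\bar u$ the penalty would not vanish at $u=\bar u$ and the equality on the left-hand side would fail. I would remark on this at the end to underline why the augmented MSA in the next subsection updates the penalty center at each iteration.
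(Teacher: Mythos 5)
Your proposal is correct and matches the paper's own proof: both invoke Theorem 2.1 for the adjoint state $\bar p$ and the classical minimization, then use the chain $\tilde H(\bar y,\bar u,\bar p,\bar u)=H(\bar y,\bar u,\bar p)\le H(\bar y,u,\bar p)\le H(\bar y,u,\bar p)+\rho(\bar u-u)^2=\tilde H(\bar y,u,\bar p,\bar u)$, with the same argument repeated on the boundary. Your closing remark about why the penalty center must be the reference control itself is a helpful addition the paper leaves implicit.
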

\begin{proof}The following results can be readily derived using the Pontryagin's principle
	\begin{align*}
	&\tilde{H}_{\Omega_T}(x,t,\bar{y},\bar{u},\bar{p},\bar{u})=H_{\Omega_T}(x,t,\bar{y},\bar{u},\bar{p})\leq H_{\Omega_T}(x,t,\bar{y},u,\bar{p})\\
	&\leq H_{\Omega_T}(x,t,\bar{y},u,\bar{p})+\rho(\bar{u}-u)^2=\tilde{H}_{\Omega_T}(x,t,\bar{y},u,\bar{p},\bar{u}).
		\end{align*}
\end{proof}
Based on the above proposition, we derive the augmented MSA as presented below.
       \begin{algorithm}[H]
       	\caption{Augmented MSA}
       	\label{alg:amsa}
       	\begin{algorithmic}[1]
       		\State \textbf{Initialize:} Choose initial controls $(u_{1}, v_{1}) \in L^r(\Omega_T) \times L^q(\Sigma_T)$, set penalty factor $\rho$, set iteration index $i = 1$. 
       		\Repeat 
       		\State \textbf{Solve the state equation:}
       		\[
       		\frac{\partial y_i}{\partial t} + A y_i + f(x, t, y_i, u_i) = 0, \quad 
       		\frac{\partial y_i}{\partial n_A} + v_i = 0, \quad y_i(\cdot,0) = y_0.
       		\]
       		\State \textbf{Solve the adjoint equation:}
       		\[
       		-\frac{\partial p_i}{\partial t} + A p_i = \nabla_y H_{\Omega_T}(x, t, y_i, u_i, p_i),
       		\]
       		\[
       		\frac{\partial p_i}{\partial n_A} = \nabla_y H_{\Sigma_T}(s, t, y_i, v_i, p_i), \quad 
       		p_i(\cdot, T) = L'_y(x, y_i(\cdot, T)).
       		\]
       		\State \textbf{Update the controls:}
       		\[
       		\begin{aligned}
       			u_{i+1} &= \arg\min_{u \in U_{ad}} \tilde{H}_{\Omega_T}(x, t, y_i, u, p_i,u_i), \\
       			v_{i+1} &= \arg\min_{v \in V_{ad}} \tilde{H}_{\Sigma_T}(s, t, y_i, v, p_i,v_i),
       		\end{aligned}
       		\]
       		for each $(x, s, t) \in \Omega \times \Gamma \times [0,T]$.
       		\State Increment the iteration index: $i := i + 1$.
       		\Until{The termination criterion is satisfied.}
       	\end{algorithmic}
       \end{algorithm}
       In Algorithm \ref{alg:amsa}, the Hamiltonian minimization step in the MSA is replaced with the minimization of the augmented Hamiltonian. By selecting an appropriate value for the penalty factor \( \rho \), the control index \( J \) is guaranteed to decrease with each iteration.   
    \subsection{Convergence of algorithm}
    We now establish the convergence of Algorithm \ref{alg:amsa} through the following theorem.
    \begin{theorem}
Suppose Assumption 1 and Assumption 2 hold, and let \( u_0 \in \mathcal{U} \) and \( v_0 \in \mathcal{V} \) be any initial measurable controls satisfying \( J(u_0, v_0) < +\infty \). Further, assume that \( \inf_{(u,v) \in \mathcal{U} \times \mathcal{V}} J(u,v) > -\infty \). Then, for \( \rho > \tilde{C} \), if \( u_i \) and \( v_i \) are generated by Algorithm \ref{alg:amsa}, there exist \( \tilde{u} \in \mathcal{U} \) and \( \tilde{v} \in \mathcal{V} \) such that  
\[
u_k \to \tilde{u} \; \mathrm{in} \; L^2(\Omega_T) \quad \mathrm{and} \quad v_k \to \tilde{v} \; \mathrm{in} \; L^2(\Sigma_T).
\]
    \end{theorem}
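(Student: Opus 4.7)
The plan is to combine the pointwise minimality that defines
\((u_{i+1},v_{i+1})\) with the error bound of Lemma 3.3 in order
to turn Algorithm \ref{alg:amsa} into a strict descent scheme for
\(J\), and then to extract strong convergence from the summability of
the consecutive increments that descent produces.

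First I would unfold the augmented update step. Since \(u_{i+1}\)
pointwise minimises \(\tilde H_{\Omega_T}(x,t,y_i,\cdot,p_i,u_i)=
H_{\Omega_T}(x,t,y_i,\cdot,p_i)+\rho(u_i-\cdot)^{2}\) and \(u_i\) is
admissible, evaluating the defining inequality at \(u=u_i\) and
integrating gives
\[
\int_{\Omega_T}\bigl[H_{\Omega_T}(y_i,u_{i+1},p_i)
-H_{\Omega_T}(y_i,u_i,p_i)\bigr]\,dx\,dt
\le -\rho\,\|u_{i+1}-u_i\|^{2}_{L^{2}(\Omega_T)},
\]
and the analogous inequality on \(\Sigma_T\). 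Inserting these into
Lemma 3.3 with \(u^\phi=u_{i+1}\), \(u^\theta=u_i\),
\(v^\phi=v_{i+1}\), \(v^\theta=v_i\) yields the key descent
inequality
\[
J(u_{i+1},v_{i+1})-J(u_i,v_i)
\le -(\rho-\tilde C)\bigl(\|u_{i+1}-u_i\|^{2}_{L^{2}(\Omega_T)}
+\|v_{i+1}-v_i\|^{2}_{L^{2}(\Sigma_T)}\bigr).
\]

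Under \(\rho>\tilde C\) this makes \(\{J(u_i,v_i)\}\) monotonically
decreasing, and since \(\inf J>-\infty\) it converges. Telescoping
produces
\[
\sum_{i=1}^{\infty}\bigl(\|u_{i+1}-u_i\|^{2}_{L^{2}(\Omega_T)}
+\|v_{i+1}-v_i\|^{2}_{L^{2}(\Sigma_T)}\bigr)
\le\frac{J(u_1,v_1)-\inf J}{\rho-\tilde C}<\infty,
\]
so in particular \(\|u_{i+1}-u_i\|_{L^{2}(\Omega_T)}\to0\) and
\(\|v_{i+1}-v_i\|_{L^{2}(\Sigma_T)}\to0\).

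I expect the main obstacle to be upgrading this asymptotic regularity
to strong convergence of the \emph{entire} sequences rather than only
a subsequence, since square-summability of consecutive increments
does not by itself give a Cauchy sequence in \(L^{2}\). The plan
is to use the monotonicity of \(J\) together with Assumption 1 to
obtain uniform bounds on \(\{u_i\}\) and \(\{v_i\}\), extract weakly
convergent subsequences with limits \((\tilde u,\tilde v)\), and then
use the pointwise strict convexity supplied by the quadratic penalty
\(\rho(u-u_i)^{2}\) together with \(u_{i+1}-u_i\to0\) to identify
\((\tilde u,\tilde v)\) as the unique fixed point of the pointwise
augmented-Hamiltonian minimisation map. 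Uniqueness of every
subsequential limit, combined with the asymptotic regularity above,
then pins the whole sequence down to \((\tilde u,\tilde v)\) in the
strong \(L^{2}\) topology.
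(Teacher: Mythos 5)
Your derivation of the descent inequality and of the square-summability of the increments is correct and coincides exactly with the paper's argument: evaluating the pointwise minimality of $\tilde H$ at the previous control, integrating, inserting the result into Lemma 3.3, and telescoping under $\rho>\tilde C$ and $\inf J>-\infty$. (Your constant $\tfrac{1}{\rho-\tilde C}$ is the correct one; the paper writes $(\rho-\tilde C)$ as a factor rather than a divisor, which is a typo.)

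The obstacle you flag at the end is not merely the ``main difficulty'' of the proof --- it is a genuine gap, and the paper itself falls into it. After obtaining $\sum_{i}\bigl(\|u_{i+1}-u_i\|^2_{L^2(\Omega_T)}+\|v_{i+1}-v_i\|^2_{L^2(\Sigma_T)}\bigr)<\infty$, the paper concludes verbatim ``by the Cauchy convergence criterion'' that the sequences converge. This inference is false: square-summability of consecutive increments does not imply the Cauchy property. Take any fixed $w$ with $\|w\|_{L^2(\Omega_T)}=1$ and set $u_i=\bigl(\sum_{k=1}^{i}k^{-1}\bigr)w$; then $\|u_{i+1}-u_i\|^2=(i+1)^{-2}$ is summable while $\{u_i\}$ diverges. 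What the summability actually yields is only $\|u_{i+1}-u_i\|\to 0$ (asymptotic regularity), exactly as you say. So you have correctly located a flaw in the published proof. Your proposed repair, however, remains a sketch with its own unresolved steps: (i) no uniform bound on $\{u_i\}$ in $L^2$ is established anywhere (neither Assumption 1 nor Assumption 2 gives coercivity of $J$ in the controls, and $U_{ad}$, $V_{ad}$ are not assumed bounded), so the extraction of weakly convergent subsequences is not justified; (ii) even granting weak subsequential limits and uniqueness of the limit point, this delivers weak convergence of the full sequence, whereas the theorem asserts strong $L^2$ convergence --- upgrading weak to strong would require additional structure such as uniform convexity of the Hamiltonian in the control or compactness of the admissible sets; (iii) identifying the limit as a fixed point of the pointwise argmin map requires continuity of that map with respect to $(y_i,p_i,u_i)$, which has not been argued. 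A clean fix would instead strengthen the descent estimate to control $\|u_{i+1}-u_i\|_{L^2}$ linearly (giving a summable, not just square-summable, series), e.g.\ by exploiting the strong convexity modulus $\rho$ of the augmented Hamiltonian together with a bound on the variation of the minimizers; absent such an argument, the theorem as stated is not proved by either your proposal or the paper.
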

    \begin{proof}According to Lemma 3.3, we have
\begin{align*}
&J(u_{i+1},v_{i+1})-J(u_i,v_i)\leq\int_{\Omega_T}[H_{\Omega_T}(y_i, u_{i+1})-H_{\Omega_T}(y_i, u_i)]dxdt\\
&+\int_{\Sigma_T}[H_{\Sigma_T}(y_i, v_{i+1})-H_{\Sigma_T}(y_i, v_i)]dsdt\\
&+\tilde{C}\big(\parallel u_{i+1}-u_i\parallel^2_{L^2(\Omega_T)}+\parallel v_{i+1}-v_i\parallel^2_{L^2(\Sigma_T)}\big).
\end{align*}
From the augmented Hamiltonian minimization step of Algorithm \ref{alg:amsa}, we obtain
\begin{align*}
H_{\Omega_T}(y_i, u_{i+1})+\rho(u_{i+1}-u_i)^2\leq H_{\Omega_T}(y_i, u_i),
\end{align*}
\begin{align*}
H_{\Sigma_T}(y_i, v_{i+1})+\rho(v_{i+1}-v_i)^2\leq H_{\Sigma_T}(y_i, v_i),
\end{align*}
which implies
\begin{align*}
&J(u_{i+1},v_{i+1})-J(u_i,v_i)\\
&\leq(\tilde{C}-\rho)\big(\parallel u_{i+1}-u_i\parallel^2_{L^2(\Omega_T)}+\parallel v_{i+1}-v_i\parallel^2_{L^2(\Sigma_T)}\big).
\end{align*}
Summing \( N \) terms under the condition \( \rho > \tilde{C} \), we derive the following estimate
\begin{align*}
0&\leq\sum_{i=0}^{N}\big(\parallel u_{i+1}-u_i\parallel^2_{L^2(\Omega_T)}+\parallel v_{i+1}-v_i\parallel^2_{L^2(\Sigma_T)}\big)\\
&\leq(\rho-\tilde{C})\big(J(u_0,v_0)-J(u_N,v_N)\big)\\
&\leq(\rho-\tilde{C})\big(J(u_0,v_0)-\inf_{(u,v)\in\mathcal{U}\times\mathcal{V}} J(u,v)\big).
\end{align*}
Thus, we have  
\[
\sum_{i=0}^{\infty} \big( \| u_{i+1} - u_i \|^2_{L^2(\Omega_T)} + \| v_{i+1} - v_i \|^2_{L^2(\Sigma_T)} \big) < +\infty.
\]  
By the Cauchy convergence criterion, there exist \( \tilde{u} \in \mathcal{U} \) and \( \tilde{v} \in \mathcal{V} \) such that \( u_i \to \tilde{u} \) in \( L^2(\Omega_T) \) and \( v_i \to \tilde{v} \) in \( L^2(\Sigma_T) \).
    \end{proof}    
    \section{Numerical tests}
In this section, we present numerical results for an optimal control problem involving a parabolic equation, where \( \Omega \) is a two-dimensional domain. The problem \eqref{P} is solved using the augmented Method of Successive Approximations (MSA) as described in Algorithm \ref{alg:amsa}. The termination condition for the algorithm is given by
\[
J(u_{i+1}) - J(u_i) < \epsilon.
\]
We consider the following optimal control problem with \( \Omega_T = [0,1] \times [0,1] \times [0,1] \):
\[
\min J(y,u) := \frac{1}{2} \| y(\cdot,T) - y_d \|_{L^2(\Omega)}^2 + \frac{\alpha}{2} \| u \|_{L^2(\Omega_T)}^2,
\]
subject to
\[
\begin{cases}
	y_t - \Delta y = u + y, & \mathrm{in }\; \Omega_T, \\
	\partial_{\nu_A} y = 0, & \mathrm{on }\; \Sigma_T, \\
	y(\cdot, 0) = y_0, & \mathrm{in }\; \overline{\Omega}.
\end{cases}
\]
Here, we choose the following parameters: \( u_0 = 0.01 \), \( y_0 = \sin(\pi x) \sin(\pi y) \), \( y_d = \exp(-2\alpha\pi T) \sin(\pi x) \sin(\pi y) \), \( \alpha = 1 \), and \( \rho = 1 \). 

To solve the state and adjoint equations, we employ the finite difference method. The spatial step size is set to \( 0.01 \) in each direction, and the time step size is set to \( 0.04 \). The learning rate for the gradient descent in the sub-problem is dynamically adjusted, starting at \( 0.001 \) and multiplied by \( 0.9 \) every 100 iterations. The termination criterion is set to \( \epsilon = 10^{-4} \). The numerical results obtained are presented below.\\
    \begin{figure}[ht]
	\centering
	\includegraphics[width=0.7\textwidth]{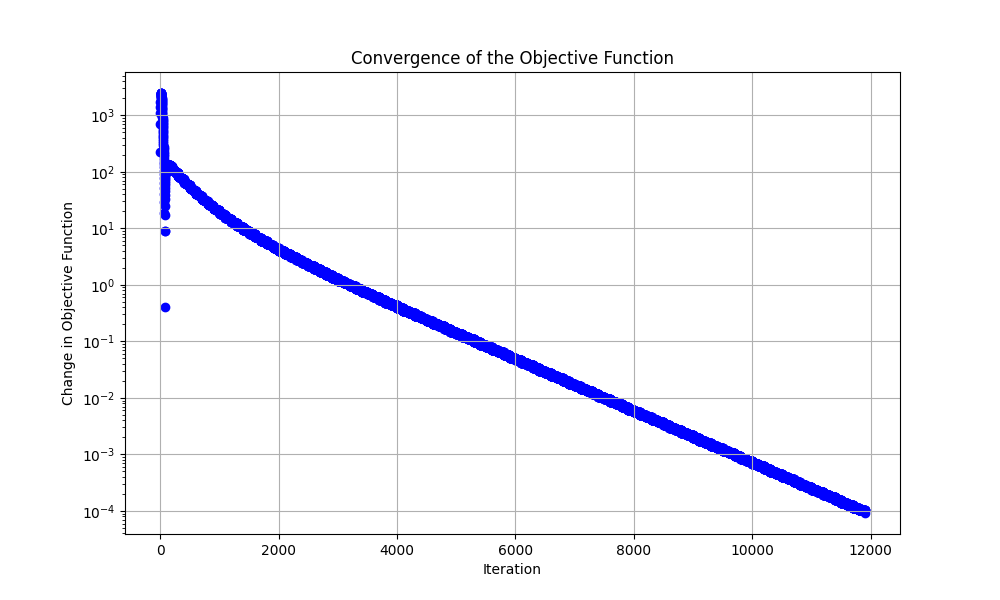} 
	\caption{The difference in the objective function $J(u_{i+1})-J(u_i)$} 
	\label{fg1} 
\end{figure}
\begin{figure}[ht]
	\centering
	\includegraphics[width=0.7\textwidth]{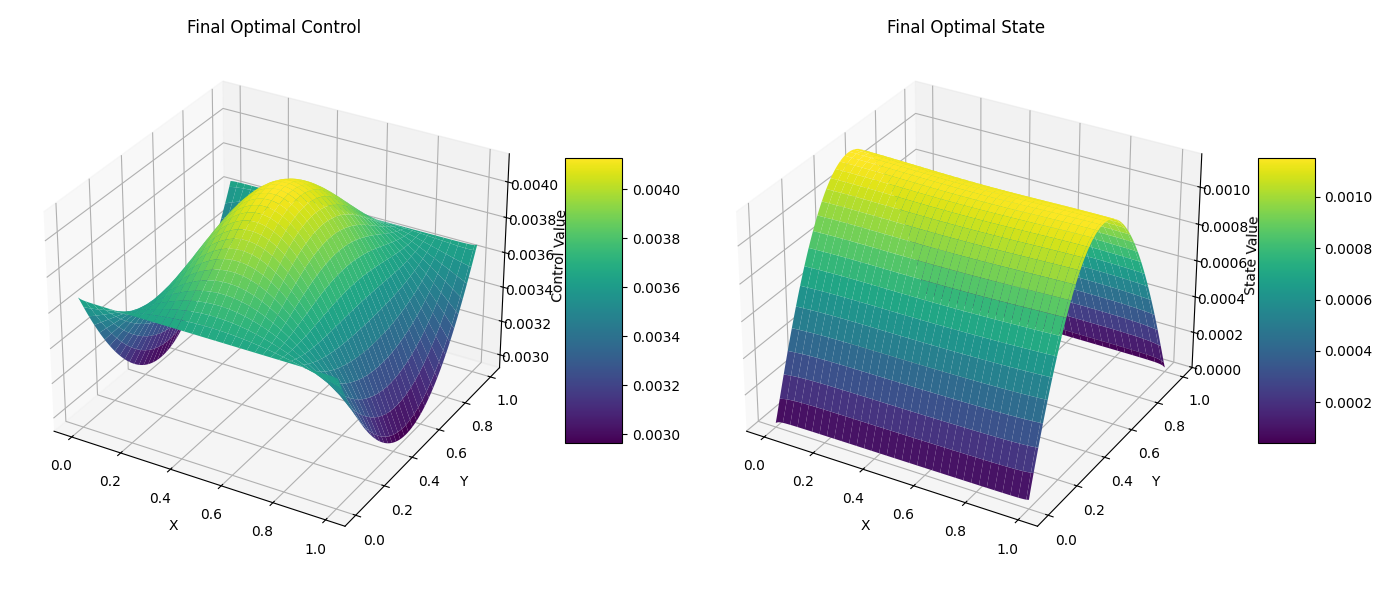} 
	\caption{ Computed discrete optimal state $y$ (right) and optimal control $u$ (left)} 
	\label{fg2}
\end{figure}
\\
As shown in \ref{fg1}, the difference in objective function values generally exhibits a monotonically decreasing trend, demonstrating the effectiveness of the augmented MSA (AMSA) presented in this paper. Since gradient descent is employed for solving the Hamiltonian optimization step, the algorithm may converge to a local optimal solution. This limitation can be addressed by incorporating a global optimization algorithm for further improvement.

    \bibliographystyle{plain}
    \bibliography{AMSA}
\end{document}